\documentclass{amsart}

\usepackage{setspace, amsmath, amsthm, amssymb, amsfonts, amscd, epic, graphicx, ulem, dsfont}
\usepackage[T1]{fontenc}
\usepackage{multirow}
\usepackage{bbm}

\makeatletter \@namedef{subjclassname@2010}{
  \textup{2010} Mathematics Subject Classification}
\makeatother

\newtheorem{lem}{Lemma}
\newtheorem{thm}{Theorem}
\newtheorem{pro}{Proposition}

\newtheorem{cor}{Corollary}

\theoremstyle{remark}
\newtheorem*{rema}{Remark}

\theoremstyle{definition}

\newtheorem{exa}{\textbf{Example}}

\newcommand{\Ima}{\text{\textrm{Im}}}

\newcommand{\C}{\mathbb{C}}

\begin{document}

\title{Exponentials of Bounded Normal Operators}
\author[A. CHABAN AND M. H. MORTAD]{AICHA CHABAN $^1$ AND MOHAMMED HICHEM MORTAD $^2$*}

\dedicatory{}
\thanks{* Corresponding author}
\date{}
\keywords{Normal operators. Imaginary part of a linear operator.
Operator exponentials. Commutativity. Spectrum. Fuglede Theorem.
Hilbert space.}

\subjclass[2010]{Primary 47A10,
  47A60}

\address{$^1$ Department of Mathematics, University of Chlef (Hassiba
Benbouali). BP 151, Chlef, 20000. Algeria.}
\email{aichachaban@yahoo.fr} \address{$^2$ Department of
Mathematics, University of Oran, B.P. 1524, El Menouar, Oran 31000,
Algeria.\newline {\bf Mailing address for the corresponding author}:
\newline Dr Mohammed Hichem Mortad \newline BP 7085 Seddikia Oran
\newline 31013 \newline Algeria}

\email{mhmortad@gmail.com, mortad@univ-oran.dz.}

\begin{abstract}
The present paper is mainly concerned with equations involving
exponentials of bounded normal operators. Conditions implying
commutativity of those normal operators are given. This is carried
out without the known $2\pi i$-congruence-free hypothesis. It is
also a continuation of a recent work by the corresponding author.
\end{abstract}

\maketitle

\section{Introduction}
First, we assume the reader is familiar with notions and results on
Bounded Operator Theory. Some important references are \cite{Con}
and \cite{RUD}. We suppose that all operators are linear and defined
on a complex Hilbert space, designated by $H$. The set of all these
operators is denoted by $B(H)$ which is a Banach algebra.

Let us just say a few words about notations. It is known that any
linear bounded operator $T$ may be expressed as $A+iB$ where $A$ and
$B$ are self-adjoint. In fact, it is known that $A=\frac{T+T^*}{2}$
and $B=\frac{T-T^*}{2i}$. Then we call $A$ the real part of $T$ and
we write Re$T$;  $B$ the imaginary part of $T$ and we write Im$T$.
It is also well-known that $T$ is normal if and only if $AB=BA$.

The following standard result will be useful.

\begin{lem}\label{e^A=I A s.a. A=0 Lemma}
Let $T$ be a self-adjoint operator such that $e^T=I$. Then $T=0$.
\end{lem}

We include a proof for the reader's convenience.

\begin{proof}
Assume $e^T=I$ and let $x\in\sigma(T)$. Then $e^x=1$. Since $T$ is
self-adjoint, $x$ is real and hence $e^x=1$ implies $x=0$ only.
Since $\sigma(T)$ is never empty, $\sigma(T)=\{0\}$. Again, since
$T$ is self-adjoint, by the "spectral radius theorem", we have
\[\|T\|=r(T)=0\Longrightarrow T=0.\]
\end{proof}

We will also be using the celebrated Fuglede theorem, which we
recall for the reader's convenience (for more versions, see
\cite{MHM1,MHM-Fug-Put-GMJ,Mortad-Fuglede-Putnam-CAOT-2011}). For a
proof, see e.g. \cite{Con} or \cite{RUD}.

\begin{thm}[Fuglede]
Let $A,N\in B(H)$. Assume that $N$ is normal. Then
\[AN=NA\Longrightarrow AN^*=N^*A\]
\end{thm}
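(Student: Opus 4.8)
The plan is to follow Rosenblum's argument, which packages the whole statement into a boundedness estimate for an entire operator-valued function and then invokes Liouville's theorem. First I would observe that the hypothesis $AN=NA$ propagates to power series: since $A$ commutes with every power $N^k$, it commutes with $e^{\bar\lambda N}=\sum_{k\geq 0}\frac{\bar\lambda^k N^k}{k!}$ for every $\lambda\in\C$, so that $A=e^{-\bar\lambda N}A\,e^{\bar\lambda N}$. The choice of the conjugate $\bar\lambda$ here, rather than $\lambda$, is deliberate and will pay off below.

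Next I would introduce the $B(H)$-valued function $f(\lambda)=e^{\lambda N^*}A\,e^{-\lambda N^*}$, which is entire because the exponential series converge in operator norm and operator multiplication is norm-continuous. Substituting the identity $A=e^{-\bar\lambda N}A\,e^{\bar\lambda N}$ into the middle factor and then using normality of $N$ — the point where $NN^*=N^*N$ is essential, since it is what lets me merge the commuting exponentials $e^{\lambda N^*}$ and $e^{-\bar\lambda N}$ into a single exponential — I would rewrite $f(\lambda)=e^{R}A\,e^{-R}$, where $R=\lambda N^*-\bar\lambda N$.

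The crucial observation is then that $R^*=\bar\lambda N-\lambda N^*=-R$, so $R$ is skew-adjoint and $e^{R}$ is unitary for every $\lambda$. Consequently $\|f(\lambda)\|\leq \|e^{R}\|\,\|A\|\,\|e^{-R}\|=\|A\|$, uniformly in $\lambda$, so $f$ is a bounded entire function. Applying Liouville's theorem (to each scalar entire function $\lambda\mapsto\langle f(\lambda)x,y\rangle$, for fixed $x,y\in H$) forces $f$ to be constant, hence equal to $f(0)=A$ for all $\lambda$.

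Finally I would differentiate. Since $f'(\lambda)=N^*f(\lambda)-f(\lambda)N^*$, constancy of $f$ gives $0=f'(0)=N^*A-AN^*$, which is exactly $AN^*=N^*A$. The main obstacle, and the entire content of the trick, is arranging the exponents so that the conjugating operator is genuinely unitary: it is the conjugate $\bar\lambda N$ that makes $R$ skew-adjoint, and it is precisely normality of $N$ that permits combining the two exponentials into $e^{R}$ in the first place. Without either ingredient the uniform bound $\|f(\lambda)\|\leq\|A\|$ would fail and the Liouville step would be unavailable.
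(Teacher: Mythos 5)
Your proposal is correct: it is Rosenblum's classical proof of the Fuglede theorem, and every step (propagating $AN=NA$ to $e^{\bar\lambda N}$, merging the commuting exponentials via normality into the skew-adjoint $R=\lambda N^*-\bar\lambda N$, the uniform bound $\|f(\lambda)\|\leq\|A\|$, Liouville, and differentiation at $0$) is carried out accurately. The paper gives no proof of its own here but defers to \cite{Con} and \cite{RUD}, and your argument is precisely the one found in those references.
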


The exponential of an operator appears in many areas of science, and
in mathematics in particular. For example, it appears when solving a
problem of the type $X'=AX$ where $A$ is an operator. It is also met
when dealing with semi-groups, the Stone theorem, the Lie product
formula, the Trotter product formula, the Feynman-Kac formula,
(bounded) wave operators, etc... See
\cite{Kat-book,Mortad-Scattering,RS1,RS2,RS3}.

Commutativity of operators and its characterization is one of the
most important topics in Operator Theory. Thus when the
commutativity of exponentials implies that of operators becomes an
interesting problem. In this paper we are mainly concerned with
problems of this sort. Many authors have worked previously on
similar questions (see
\cite{Fotios-Plaiogannis,schmoeger1999,schmoeger2000,schmoeger2001,wermuth.1997.pams}).
However, they all used what is known as the $2\pi i$-congruence-free
hypothesis (and similar hypotheses). G. Bourgeois (\cite{Bourgeois})
dropped that hypothesis but he only worked in low dimensions. Very
recently, M. H. Mortad (see \cite{Mortad-Exp-normal-Colloquium})
gave a different approach to this problem for normal operators, by
bypassing the $2\pi i$-congruence-free hypothesis. He used the
well-known cartesian decomposition of normal operators as $A+iB$
where $A$ and $B$ are commuting self-adjoint operators, so that the
following result may be applied (whose proof may be found in e.g.
\cite{wermuth.1997.pams})

\begin{thm}\label{e^Ae^B=e^Be^A iff A=B A,B self-adjoint}
Let $A$ and $B$ be two self-adjoint operators defined on a Hilbert
space. Then
\[e^Ae^B=e^Be^A \Longleftrightarrow AB=BA.\]
\end{thm}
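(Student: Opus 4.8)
The plan is to treat the two implications separately, with the forward implication carrying essentially all the weight. The direction $AB=BA\Rightarrow e^Ae^B=e^Be^A$ is the routine one: when $A$ and $B$ commute, the partial sums of the two exponential series commute with one another, and passing to the norm limit preserves this, yielding $e^Ae^B=e^Be^A$. I would dispose of this in a single line.

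For the converse, the guiding idea is that a \emph{bounded} self-adjoint operator can be recovered from its exponential through the continuous functional calculus. Since $A$ is bounded and self-adjoint, $\sigma(A)\subseteq[-\|A\|,\|A\|]$, and hence by the spectral mapping theorem $\sigma(e^A)=e^{\sigma(A)}\subseteq[e^{-\|A\|},e^{\|A\|}]$, a compact subset of $(0,\infty)$. On such a set the function $\log$ is continuous, so by the Weierstrass approximation theorem it is a uniform limit of polynomials $p_n$. Because the continuous functional calculus for the positive operator $e^A$ is an isometric $*$-homomorphism, $p_n(e^A)\to\log(e^A)=A$ in operator norm; that is, $A$ lies in the norm closure of the polynomials in $e^A$. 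The same reasoning gives $B$ as a norm limit of polynomials $q_m(e^B)$.

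From here commutativity propagates through the limits. Assuming $e^Ae^B=e^Be^A$, each polynomial $p_n(e^A)$ commutes with $e^B$, and letting $n\to\infty$ and using continuity of multiplication in $B(H)$ shows that $A$ commutes with $e^B$. Consequently $A$ commutes with every $q_m(e^B)$, and letting $m\to\infty$ in the same way gives $AB=BA$, as desired. (Note that this argument is self-contained and does not require Fuglede's theorem, though that result is available.)

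I expect the only genuine subtlety to be the well-definedness and continuity of the logarithm used to invert the exponential, and this is exactly where boundedness is essential: it is boundedness of $A$ that forces $\sigma(e^A)$ to be bounded away from $0$, so that $\log$ is continuous on $\sigma(e^A)$ and the Weierstrass approximation applies. For unbounded $A$ the spectrum of $e^A$ could accumulate at $0$ and this step would fail. Everything else in the proof is simply the continuity of operator multiplication together with the homomorphism property of the functional calculus.
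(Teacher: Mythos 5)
Your proof is correct. Note, however, that the paper itself offers no proof of this theorem: it is quoted as a known result with a pointer to Wermuth's paper, so the comparison is really with that source. Wermuth works in a general unital Banach algebra under a $2\pi i$-incongruence hypothesis on the spectra (automatically satisfied by self-adjoint operators, whose spectra are real) and recovers $A$ from $e^A$ via the holomorphic functional calculus and polynomial/rational approximation of the principal logarithm. Your argument is the natural Hilbert-space specialization of the same idea: since $e^A$ is a positive invertible operator with $\sigma(e^A)\subseteq[e^{-\|A\|},e^{\|A\|}]$, the \emph{continuous} functional calculus plus Weierstrass approximation of $\log$ suffices, and the composition rule gives $\log(e^A)=A$; commutation with $e^B$ then passes to polynomials in $e^A$ and to their norm limit $A$, and symmetrically to $B$. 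This buys a shorter, self-contained proof at the price of generality: it uses the $C^*$-structure of $B(H)$ and does not extend as stated to general Banach algebras, whereas Wermuth's version does. The one step worth making explicit if you write this up is the composition property $f(g(A))=(f\circ g)(A)$ of the continuous functional calculus, which is what justifies $\log(e^A)=A$; everything else is exactly as you say.
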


Using a result on similarities (due to S. K. Berberian in
\cite{Berberian-adjoint-similar}), the following two results were
then obtained (both appeared in \cite{Mortad-Exp-normal-Colloquium})

\begin{pro}\label{e^Se^N S self-adjoint, N normal}
Let $N$ be a normal operator with cartesian decomposition $A+iB$.
Let $S$ be a self-adjoint operator.  If $\sigma(B)\subset (0,\pi)$,
then
\[e^Se^N=e^Ne^S\Longleftrightarrow SN=NS.\]
\end{pro}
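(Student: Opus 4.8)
The plan is to prove Proposition 1 by exploiting the hypothesis $\sigma(B)\subset(0,\pi)$ to upgrade the commutativity of exponentials to the commutativity of the operators themselves.

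The plan is to prove Proposition 1 by using the Cartesian decomposition $N=A+iB$ to split the commutation of exponentials into a statement about $A$ (easy) and a statement about $B$ (where the spectral hypothesis does all the work).

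First I would dispose of the implication ``$\Leftarrow$'', which needs no hypothesis on $\sigma(B)$. If $SN=NS$, then $S$ commutes with every power of $N$, hence with $e^{N}$; symmetrically $N$, and therefore $e^{N}$, commutes with every power of $S$ and so with $e^{S}$. Thus $e^{S}e^{N}=e^{N}e^{S}$.

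For the substantial implication ``$\Rightarrow$'', I would exploit normality. Since $N$ is normal, $A$ and $B$ commute, so $e^{N}=e^{A}e^{iB}$ and $e^{N^{*}}=e^{A}e^{-iB}$. Starting from $e^{S}e^{N}=e^{N}e^{S}$ and taking adjoints (using $S=S^{*}$, whence $(e^{S})^{*}=e^{S}$ and $(e^{N})^{*}=e^{N^{*}}$), I obtain that $e^{S}$ also commutes with $e^{N^{*}}$. Consequently $e^{S}$ commutes with the product $e^{N}e^{N^{*}}=e^{2A}$ and with $e^{N}(e^{N^{*}})^{-1}=e^{2iB}$. The factor $e^{2A}$ is positive and self-adjoint, so either the quoted theorem on commuting exponentials of self-adjoint operators applied to the pair $(S,2A)$, or the plain observation that $A=\tfrac12\log(e^{2A})$ is a continuous function of $e^{2A}$, yields $SA=AS$.

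The real difficulty is the $B$-part: $e^{2iB}$ is unitary rather than self-adjoint, so the self-adjoint commuting-exponentials theorem does not apply, and this is exactly where $\sigma(B)\subset(0,\pi)$ enters. Since $\sigma(B)$ is a compact subset of $(0,\pi)$, it lies in some $[a,b]$ with $b-a<\pi$, so the arc $\{e^{2i\theta}:\theta\in[a,b]\}$ has length strictly less than $2\pi$ and $\theta\mapsto e^{2i\theta}$ is a homeomorphism onto its image. Hence there is a continuous $g$ on $\sigma(e^{2iB})$ with $g(e^{2i\lambda})=\lambda$, giving $B=g(e^{2iB})$ by the continuous functional calculus; therefore $e^{S}$, which commutes with $e^{2iB}$, commutes with $B$. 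Finally, as $e^{S}$ is positive with $S=\log(e^{S})$, commutation of $B$ with $e^{S}$ upgrades to $SB=BS$. Combining $SA=AS$ and $SB=BS$ gives $S(A+iB)=(A+iB)S$, i.e. $SN=NS$. I expect the injectivity/functional-calculus step for $B$ to be the main obstacle, since it is the only place the hypothesis $\sigma(B)\subset(0,\pi)$ is genuinely used, and producing the continuous inverse $g$ on the spectral arc (of length below $2\pi$) is precisely what makes the argument go through.
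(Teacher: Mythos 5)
Your argument is correct, but it is not the route the paper takes: the paper does not reprove Proposition \ref{e^Se^N S self-adjoint, N normal} at all, quoting it from \cite{Mortad-Exp-normal-Colloquium}, where the proof rests on Berberian's theorem on operators similar to their adjoints via a \emph{cramped} unitary (here $e^{iB}$, whose spectrum lies in the open upper semicircle because $\sigma(B)\subset(0,\pi)$), together with Theorem \ref{e^Ae^B=e^Be^A iff A=B A,B self-adjoint}. You instead run a pure functional-calculus argument: after splitting off $e^{2A}=e^{N}e^{N^*}$ and $e^{2iB}=e^{N}(e^{N^*})^{-1}$ (legitimate, since normality makes $N$ and $N^*$ commute, and $e^S$ commutes with $e^{N^*}$ by taking adjoints or by Fuglede), you invert the exponential on the spectral arc. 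The steps all check out: $\sigma(B)$ compact in $(0,\pi)$ sits in some $[a,b]$ with $b-a<\pi$, so $\theta\mapsto e^{2i\theta}$ is a homeomorphism of $[a,b]$ onto an arc of length $2(b-a)<2\pi$, the composition rule for the continuous functional calculus gives $B=g(e^{2iB})$, and an operator commuting with the unitary $e^{2iB}$ commutes with its adjoint (hence with the whole $C^*$-algebra it generates, hence with $B$); the final upgrade $S=\log(e^S)$ is also sound. What your approach buys is self-containedness (no Berberian, and even Theorem \ref{e^Ae^B=e^Be^A iff A=B A,B self-adjoint} is dispensable, as you note via $A=\tfrac12\log(e^{2A})$ -- though on that alternative route you should state explicitly that you then apply the same $S=\log(e^S)$ upgrade to get $SA=AS$ rather than merely $e^SA=Ae^S$), and it makes visible that the only thing used about $(0,\pi)$ is that it is an interval of length $\pi$, which is exactly the content of the Remark following the Proposition. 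What the paper's route buys is that Berberian's cramped-unitary lemma and Theorem \ref{e^Ae^B=e^Be^A iff A=B A,B self-adjoint} are reusable black boxes for the subsequent two-normal-operator Theorem \ref{e^Me^N N,M normal operators}.
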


\begin{rema}
Going back to the proof of Proposition \ref{e^Se^N S self-adjoint, N
normal}, we see that we may take $(-\frac{\pi}{2},\frac{\pi}{2})$ in
lieu of $(0,\pi)$ without any problem. Hence the same results hold
with this new interval. Thus any self-adjoint operator (remember
that its imaginary part then must vanish) obeys the given condition
on the spectrum.
\end{rema}

\begin{thm}\label{e^Me^N N,M normal operators}
Let $N$ and $M$ be two normal operators with cartesian
decompositions $A+iB$ and $C+iD$ respectively. If
$\sigma(B),\sigma(D)\subset (0,\pi)$, then
\[e^Me^N=e^Ne^M\Longleftrightarrow MN=NM.\]
\end{thm}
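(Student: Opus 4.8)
The forward implication $MN=NM\Rightarrow e^Me^N=e^Ne^M$ is the standard fact that commuting operators have commuting exponentials (expand the power series), so all the work is in the converse. The plan is to recover, from the single hypothesis $e^Me^N=e^Ne^M$, the pairwise commutativity of the four self-adjoint operators $A,B,C,D$; together with $AB=BA$ and $CD=DC$ (which hold because $N$ and $M$ are normal) this immediately yields $MN=NM$. The two structural facts I would exploit are that $e^N$ and $e^M$ are themselves normal (being functions of the normal operators $N,M$), and that, since $A,B$ commute and $C,D$ commute, one has the clean factorisations $e^Ne^{N^*}=e^{2A}$, $e^N(e^{N^*})^{-1}=e^{2iB}$ and likewise $e^{2C},e^{2iD}$ from $M$.

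First I would unentangle the real and imaginary exponential parts using Fuglede's theorem. Starting from $e^Me^N=e^Ne^M$ and applying Fuglede (to the normal operator $e^N$, then to the normal operator $e^M$, then once more) I would obtain that each of $e^M,e^{M^*}$ commutes with each of $e^N,e^{N^*}$. Consequently $e^{2C}=e^Me^{M^*}$ and $e^{2iD}=e^M(e^{M^*})^{-1}$ each commute with both $e^N$ and $e^{N^*}$, hence with the products $e^{2A}=e^Ne^{N^*}$ and $e^{2iB}=e^N(e^{N^*})^{-1}$. In other words, all four cross-commutations among $\{e^{2A},e^{2iB}\}$ and $\{e^{2C},e^{2iD}\}$ hold.

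It then remains to peel the exponentials off. For the self-adjoint pair this is immediate: $e^{2A}$ and $e^{2C}$ are commuting (positive) self-adjoint operators, so Theorem \ref{e^Ae^B=e^Be^A iff A=B A,B self-adjoint} applied to $2A$ and $2C$ gives $AC=CA$. For the remaining three relations I would instead argue that each self-adjoint operator is a function of the corresponding exponential: $A=\frac12\log(e^{2A})$ and $C=\frac12\log(e^{2C})$ via the continuous logarithm on $(0,\infty)$, while $B=\frac{1}{2i}\mathrm{Log}(e^{2iB})$ and $D=\frac{1}{2i}\mathrm{Log}(e^{2iD})$ via a suitable branch of the logarithm. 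Since $e^{2iD}$ and $e^{2C}$ commute with $e^{2iB}$, and $B$ is a function of $e^{2iB}$, I get $BD=DB$ and $BC=CB$; since $e^{2iD}$ commutes with $e^{2A}$ and $D$ is a function of $e^{2iD}$, I get $AD=DA$. This exhausts the six pairs and finishes the proof.

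The crux --- and the only place the spectral hypothesis enters --- is the recovery of $B$ and $D$ from $e^{2iB}$ and $e^{2iD}$. The map $t\mapsto e^{2it}$ is not globally injective, so $e^{2iB}$ need not determine $B$; this is exactly the failure the $2\pi i$-congruence-free hypothesis was designed to avoid. Here, however, $\sigma(B)$ is a compact subset of the open interval $(0,\pi)$ (so $\sigma(B)\subset[\varepsilon,\pi-\varepsilon]$ for some $\varepsilon>0$), whence $\sigma(2B)\subset(0,2\pi)$ stays bounded away from the endpoints and $\sigma(e^{2iB})$ is a compact arc avoiding the point $1$. On such an arc a branch of $\mathrm{Log}$ is continuous and inverts $t\mapsto e^{2it}$, so $B$ is genuinely a continuous function of $e^{2iB}$ and the functional calculus transfers commutation as claimed; the same applies to $D$ using $\sigma(D)\subset(0,\pi)$. (By the Remark following Proposition \ref{e^Se^N S self-adjoint, N normal} one could equally take $\sigma(B),\sigma(D)\subset(-\tfrac{\pi}{2},\tfrac{\pi}{2})$ and use the principal branch.) I expect this inversion step to be the main obstacle to make rigorous, everything else being either Fuglede bookkeeping or a direct appeal to Theorem \ref{e^Ae^B=e^Be^A iff A=B A,B self-adjoint}.
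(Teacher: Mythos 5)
Your argument is correct, but it is not the route the paper takes: Theorem \ref{e^Me^N N,M normal operators} is in fact stated here without proof, being imported from \cite{Mortad-Exp-normal-Colloquium}, where it is obtained via Proposition \ref{e^Se^N S self-adjoint, N normal} and, ultimately, Berberian's theorem on operators similar to their adjoints by a \emph{cramped} unitary --- the hypothesis $\sigma(B)\subset(0,\pi)$ serving there precisely to make $e^{iB}$ cramped. Your proof replaces that machinery by two self-contained ingredients: Fuglede bookkeeping to show that $e^{2A}=e^Ne^{N^*}$, $e^{2iB}=e^N(e^{N^*})^{-1}$, $e^{2C}$ and $e^{2iD}$ commute pairwise across the two operators, and the continuous functional calculus (with the composition rule $f(g(T))=(f\circ g)(T)$) to invert $t\mapsto e^{2it}$ on $\sigma(2B)\subset(0,2\pi)$, which is legitimate because $\sigma(B)$ is compact in $(0,\pi)$, so the spectrum of the unitary $e^{2iB}$ misses a neighbourhood of $1$ and a single continuous branch of the logarithm recovers $B$. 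Two small observations: once the logarithm is in hand you do not actually need Theorem \ref{e^Ae^B=e^Be^A iff A=B A,B self-adjoint} for the pair $A,C$, since the same functional-calculus argument applied to the positive invertible operators $e^{2A},e^{2C}$ gives $AC=CA$ directly; and the transfer of commutation through the functional calculus, which you leave implicit, is standard (if $T$ commutes with a unitary $U$ then $T$ commutes with $U^{-1}=U^*$, hence by Stone--Weierstrass with $h(U)$ for every $h$ continuous on $\sigma(U)$). What the cited approach buys is brevity given Berberian's lemma; what yours buys is a transparent account of where the spectral hypothesis enters, namely the injectivity of $t\mapsto e^{2it}$ on $\sigma(2B)$ and $\sigma(2D)$, whose failure is exactly what Example \ref{Main Example!!!} exhibits: there $\sigma(\Ima A)=\{\pi,-\pi\}$ and $e^{2i\,\Ima A}=I$ retains no information about $\Ima A$.
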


In this paper, we investigate further this question and accordingly,
we obtain more results. We have tried to keep the proofs as simple
as possible, so that we would allow a broader audience to read the
paper with ease. Another virtue of these proofs, is that in some
cases, they may even be applied to prove known results which use the
$2\pi i$-congruence-free hypothesis, for example. Let us now give a
 \textit{sample} of already known results on the topic of the present
paper.

\begin{thm}[Hille, \cite{Hille-1958}]Let $A$ and $B$ be both in $B(H)$ such that $e^A=e^B$. If
$\sigma(A)$ is incongruent (mod $2\pi i$), then $A$ and $B$ commute.
\end{thm}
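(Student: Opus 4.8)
The plan is to represent $A$ as a holomorphic function of the common value $T:=e^A=e^B$ and then to use that $B$ commutes with $T$ automatically. First I would note that $T$ is invertible, so $0\notin\sigma(T)$, and that $B$ commutes with $T=e^B$ because $B$ commutes with every partial sum of the exponential series and hence with its limit. By the spectral mapping theorem for the entire function $\exp$ we have $\sigma(T)=e^{\sigma(A)}$.

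The crux is to manufacture a single-valued holomorphic logarithm adapted to $\sigma(A)$. The incongruence hypothesis says exactly that no two distinct points of $\sigma(A)$ differ by an element of $2\pi i\Z$, that is, $\exp$ is injective on $\sigma(A)$; since $\sigma(A)$ is compact, $\exp:\sigma(A)\to\sigma(T)$ is then a homeomorphism. I would cover $\sigma(A)$ by finitely many open disks, each of radius less than $\pi$ so that its imaginary height is below $2\pi$ and $\exp$ is injective on it, and on the image of each disk take the logarithm branch $L$ sending $e^\lambda$ back to $\lambda$. These local branches patch: if two image disks meet, their centres $e^{\lambda},e^{\lambda'}$ are close, so by the homeomorphism property $\lambda,\lambda'$ are close, the union of the two source disks still has imaginary height below $2\pi$, $\exp$ is injective there, and both branches coincide with the unique inverse of $\exp$ on that union. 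This produces a holomorphic $L$ on an open set $V\supseteq\sigma(T)$ with $L\circ\exp=\mathrm{id}$ on a neighbourhood of $\sigma(A)$.

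With $L$ available, the composition rule for the holomorphic functional calculus yields $L(T)=L(e^A)=(L\circ\exp)(A)=A$. Finally, writing $L(T)=\frac{1}{2\pi i}\oint_\Gamma L(z)(z-T)^{-1}\,dz$ over a contour $\Gamma$ surrounding $\sigma(T)$ in $V$, one sees that any operator commuting with $T$ also commutes with each resolvent $(z-T)^{-1}$ and hence with $L(T)$. Since $B$ commutes with $T$, it commutes with $L(T)=A$, giving $AB=BA$. I expect the main obstacle to be the middle step: converting the purely set-theoretic incongruence condition into a genuine single-valued holomorphic logarithm on a neighbourhood of the possibly disconnected spectrum $\sigma(T)$, i.e. guaranteeing that the local branches agree on overlaps instead of jumping by multiples of $2\pi i$.
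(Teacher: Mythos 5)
The paper itself gives no proof of this theorem --- it is quoted as a known result of Hille --- so your attempt stands alone; its overall strategy is the classical and correct one: realize $A$ as $L(T)$ for a holomorphic $L$ near $\sigma(T)$ via the Riesz--Dunford calculus, then note that $B$ commutes with $T=e^B$, hence with every $(z-T)^{-1}$, hence with $L(T)=A$. Those outer steps are all sound. The gap is exactly where you suspect it is, and as written it is genuine: covering disks of ``radius less than $\pi$'' do not make the local branches patch. Take $\sigma(A)=\{0,\;0.1+2\pi i\}$ (incongruent mod $2\pi i$, since $0.1+2\pi i\notin 2\pi i\Z$) and $D_1=D(0,3)$, $D_2=D(0.1+2\pi i,3)$ with $3<\pi$. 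Then $e^{0.05}\in\exp(D_1)\cap\exp(D_2)$, being the image of $0.05\in D_1$ and of $0.05+2\pi i\in D_2$, yet $L_1(e^{0.05})=0.05$ while $L_2(e^{0.05})=0.05+2\pi i$: the branches jump by $2\pi i$ on the overlap. Your chain ``image disks meet $\Rightarrow e^{\lambda},e^{\lambda'}$ close $\Rightarrow\lambda,\lambda'$ close'' breaks at the second arrow: uniform continuity of $(\exp|_{\sigma(A)})^{-1}$ converts $\delta$-closeness of images into $\varepsilon$-closeness of centres only for $\delta$ depending on $\varepsilon$ \emph{and on $\sigma(A)$}; here $|1-e^{0.1}|\approx 0.105$ is ``close'' in no sense the hypothesis controls, while the centres are more than $2\pi$ apart.

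The repair is standard but must be made explicit. Either quantify your own argument: fix $\varepsilon=\pi$, let $\delta>0$ be the corresponding modulus of continuity of the inverse homeomorphism, and only then choose the common radius $r<\pi/2$ so small that $2r\sup\{|e^z|:\mathrm{dist}(z,\sigma(A))\le r\}<\delta$; with that choice ``image disks meet'' really does force the centres within $\pi$ of each other, the union of the two source disks has imaginary height below $2\pi$, and your patching goes through verbatim. Or, more cleanly: by compactness and incongruence, $\mathrm{dist}\bigl(\sigma(A),\sigma(A)+2\pi in\bigr)>0$ for each $n\neq 0$, and only finitely many $n$ matter since $\sigma(A)$ is bounded, so some open $\delta$-neighbourhood $U$ of $\sigma(A)$ satisfies $U\cap(U+2\pi in)=\emptyset$ for all $n\neq 0$; thus $\exp$ is injective on $U$, and since $\exp'$ never vanishes, $\exp:U\to\exp(U)$ is a biholomorphism whose inverse is the desired global $L$. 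With either repair your proof is complete.
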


\begin{thm}[Schmoeger, \cite{schmoeger2001}]
Let $A$ and $B$ be both in $B(H)$. Then
\begin{enumerate}
  \item If $A+B$ is normal, $\sigma(A+B)$ is generalized $2\pi
  i$-congruence-free and
  \[e^Ae^B=e^Be^A=e^{A+B},\]
  then $AB=BA$.
  \item If $A$ is normal, $\sigma(A)$ is generalized $2\pi
  i$-congruence-free and
  \[e^A=e^B,\]
  then $AB=BA$.
\end{enumerate}
\end{thm}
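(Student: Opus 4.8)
The plan is to isolate a single engine and then deduce both parts by elementary commutator chasing. The engine is the following claim, which I would prove first and refer to as $(\star)$: if $N$ is normal and $\sigma(N)$ is (generalized) $2\pi i$-congruence-free, and $T\in B(H)$ satisfies $Te^N=e^NT$, then $TN=NT$. To prove $(\star)$, note first that $N$ normal forces $e^N$ normal (because $N$ and $N^*$ commute, so $e^N(e^N)^*=e^{N+N^*}=(e^N)^*e^N$), and that by the spectral mapping theorem $\sigma(e^N)=\exp\sigma(N)$. The congruence-free hypothesis says exactly that $\exp$ is injective on the compact set $\sigma(N)$, so $\exp\colon\sigma(N)\to\sigma(e^N)$ is a homeomorphism with continuous inverse $f$ (a branch of the logarithm), and by the composition rule for the continuous functional calculus of the normal operator $e^N$ one gets $f(e^N)=(f\circ\exp)(N)=N$. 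Since $T$ commutes with $e^N$, Fuglede's theorem gives that $T$ commutes with $(e^N)^*$ as well; and by Stone--Weierstrass $f$ is a uniform limit on $\sigma(e^N)$ of polynomials in $z,\bar z$, so $N=f(e^N)$ is a norm-limit of polynomials in $e^N,(e^N)^*$, each commuting with $T$. Hence $TN=NT$. I expect $(\star)$ to be the crux: the congruence-free hypothesis is precisely what makes the logarithm $f$ single-valued and continuous, while Fuglede is what allows an a priori non-normal $T$ to reach the adjoint $(e^N)^*$.

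Granting $(\star)$, part (2) is immediate. With $A$ normal and $\sigma(A)$ congruence-free, apply $(\star)$ with $A$ in the role of $N$ and $T=B$: since $B$ commutes with $e^B=e^A$, the conclusion is $BA=AB$.

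For part (1), put $N=A+B$, which is normal with congruence-free spectrum, and record that $e^N=e^Ae^B=e^Be^A$. First I would verify that $e^B$ commutes with $e^N$: using $e^Be^A=e^Ae^B$ one computes $e^Be^N=e^Be^Ae^B=e^Ae^Be^B=e^Ne^B$. Applying $(\star)$ with $T=e^B$ gives $e^BN=Ne^B$; since $e^B$ trivially commutes with $B$, it commutes with $A=N-B$, that is $Ae^B=e^BA$. Consequently $A$ commutes with $e^N=e^Ae^B$ (it commutes with $e^A$ trivially and with $e^B$ by the previous step), so a second application of $(\star)$, now with $T=A$, yields $AN=NA$. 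Finally $A$ commutes with $B=N-A$, i.e. $AB=BA$, as desired.

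Throughout part (1) the only inputs beyond pure algebra are the lemma $(\star)$ and the invertibility of the exponentials (which justifies the cancellations). The structural point worth highlighting is that both parts reduce to $(\star)$ applied once or twice, so the entire difficulty is concentrated in building the continuous logarithm of a normal operator with congruence-free spectrum and in invoking Fuglede.
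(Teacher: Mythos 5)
The paper does not prove this theorem at all: it is quoted verbatim from Schmoeger's article as a \emph{sample of known results} that motivate the authors' own, differently-hypothesised theorems. So the relevant comparison is with the paper's analogous machinery, and there your architecture is exactly parallel: your engine $(\star)$ plays the role of the paper's Theorem \ref{e^AT=Te^A commuting!!!!} (with the congruence-free hypothesis replacing $\sigma(\Ima N)\subset(0,\pi)$), your derivation of part (1) mirrors the proof of Theorem \ref{eA+B A+B normal!!!!!} (the paper applies its engine to $T=e^A$ and then to $T=B$; you apply yours to $T=e^B$ and then to $T=A$ --- the two commutator chases are mirror images and both are correct), and your part (2) is the proof of Corollary \ref{e^A=e^B AB=BA A normal imaginary spectrum cramped!!!}. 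All of that algebra is sound, and your proof of $(\star)$ under the hypothesis that $\exp$ is injective on $\sigma(N)$ (continuous functional calculus, composition rule $f(e^N)=(f\circ\exp)(N)=N$, Fuglede to reach $(e^N)^*$, Stone--Weierstrass to approximate $f$ by polynomials in $z,\bar z$) is correct and clean.

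The genuine gap is in the hypothesis of $(\star)$. You assert that ``the congruence-free hypothesis says exactly that $\exp$ is injective on the compact set $\sigma(N)$.'' That is the plain $2\pi i$-congruence-free condition ($\lambda-\mu\notin 2\pi i(\Z\setminus\{0\})$ for all $\lambda,\mu\in\sigma(N)$). The theorem as stated assumes only the \emph{generalized} $2\pi i$-congruence-free condition of Schmoeger's papers, which is strictly weaker: it tolerates an exceptional set of spectral points that \emph{are} congruent mod $2\pi i$. Under that weaker hypothesis $\exp$ need not be injective on $\sigma(N)$, a point of $\sigma(e^N)$ may have several preimages, no continuous branch $f$ of the logarithm with $f\circ\exp=\mathrm{id}$ on $\sigma(N)$ exists, and $N$ is in general not a norm-limit of polynomials in $e^N$ and $(e^N)^*$ --- so the crux of your argument collapses exactly where the word ``generalized'' starts to matter. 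What you have proved is the theorem under the stronger, non-generalized hypothesis (essentially Hille/Wermuth/Schmoeger's earlier results); to get the stated version you would need the finer analysis of the exceptional congruent set (via spectral projections or the spectral measure of $N$) that Schmoeger carries out. Either strengthen the hypothesis you claim to use, or supply that additional argument.
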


\begin{thm}[Schmoeger, \cite{schmoeger2003}]\label{Schmoeger 2003}Let $A$ and $B$ be both in $B(H)$ such that
$e^A=e^B$. Assume that $A$ is normal.
\begin{enumerate}
  \item If $r(A)<\pi$, then $AB=BA$ (where $r(A)$ is the spectral
  radius of $A$).
  \item If $\sigma(A)$ satisfies
  \[\sigma(A)\subseteq \{z\in\C:~|\text{Ima}~z|\leq \pi\}\]
  and
  \[\sigma(A)\cap \sigma(A+2\pi i)\subseteq \{i\pi\},\]
  then $A^2B=BA^2$. If $i\pi\not\in \sigma_p(A)$ or $-i\pi\not\in
  \sigma_p(A)$, then $AB=BA$.
\end{enumerate}
\end{thm}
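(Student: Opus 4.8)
The plan is to recover $A$ (or $A^2$) as a function of the normal operator $e^A$ and then transport commutativity from $e^A$ back to $A$ by Fuglede's theorem. The common starting point is that, since $A$ is normal, $A$ and $A^*$ commute, so $e^A(e^A)^*=e^{A+A^*}=(e^A)^*e^A$; that is, $e^A$ is normal, and by the spectral mapping theorem $\sigma(e^A)=e^{\sigma(A)}$. Moreover $B$ trivially commutes with its own exponential $e^B=e^A$, so $B$ commutes with the \emph{normal} operator $e^A$; Fuglede's theorem then gives that $B$ also commutes with $(e^A)^*$. Hence $B$ lies in the commutant $\{e^A,(e^A)^*\}'$ and therefore commutes with every operator in the von Neumann algebra $W^*(e^A)$ generated by $e^A$, in particular with every bounded Borel function of $e^A$. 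It thus suffices to exhibit $A$ (resp. $A^2$) as such a function of $e^A$.

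For (1), since $r(A)<\pi$ the spectrum $\sigma(A)$ lies in the open strip $\{|\Ima z|<\pi\}$, on which the principal logarithm satisfies $\mathrm{Log}(e^z)=z$. As $\exp$ maps this strip into $\C\setminus(-\infty,0]$, the function $\mathrm{Log}$ is holomorphic on a neighbourhood of $\sigma(e^A)$, and the continuous functional calculus gives $A=\mathrm{Log}(e^A)=\mathrm{Log}(e^B)\in W^*(e^A)$. Commutativity $AB=BA$ then follows from the first paragraph.

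For (2), I would analyse the fibres of $\exp$ on $\sigma(A)$. If $z,z'\in\sigma(A)$ with $z\ne z'$ and $e^z=e^{z'}$, then $z-z'\in 2\pi i\Z\setminus\{0\}$, and since $\sigma(A)\subseteq\{|\Ima z|\le\pi\}$ the imaginary parts can differ by at most $2\pi$, forcing $\{z,z'\}=\{z_0,z_0+2\pi i\}$ with $\Ima(z_0)=-\pi$. The hypothesis $\sigma(A)\cap\sigma(A+2\pi i)\subseteq\{i\pi\}$ then forces $z_0+2\pi i=i\pi$, so the only non-trivial fibre is $\{i\pi,-i\pi\}$ (mapping to $-1$). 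Since $(i\pi)^2=(-i\pi)^2=-\pi^2$, the map $z\mapsto z^2$ is constant on every fibre of $\exp|_{\sigma(A)}$ and hence descends to a bounded Borel function $g$ on $\sigma(e^A)$ with $g(e^z)=z^2$ on $\sigma(A)$; a routine measurability argument (the one genuine technical point, handled via the Lusin--Souslin theorem, $\exp|_{\sigma(A)}$ being injective off a single point) ensures $g$ is Borel. Then $A^2=g(e^A)\in W^*(e^A)$, and the first paragraph yields $A^2B=BA^2$.

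Finally, to upgrade to $AB=BA$ under the extra hypothesis, I would try to make $z\mapsto z$ itself descend. Off the fibre $\{i\pi,-i\pi\}$ every fibre is a singleton, so $z\mapsto z$ is already fibre-constant there and only the value at $-1$ is ambiguous. Choosing $g(-1)=i\pi$ and writing $A=\int_{\sigma(A)}z\,dE(z)$ for the spectral measure $E$ of $A$, one computes $A-g(e^A)=\int_{\sigma(A)}(z-g(e^z))\,dE(z)=-2\pi i\,E(\{-i\pi\})$, the integrand vanishing except at $z=-i\pi$. If $-i\pi\notin\sigma_p(A)$ then $E(\{-i\pi\})=0$, so $A=g(e^A)\in W^*(e^A)$ and $AB=BA$; the choice $g(-1)=-i\pi$ handles $i\pi\notin\sigma_p(A)$ symmetrically. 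The main obstacle throughout is precisely this boundary identification at $\pm i\pi$: controlling it via the spectral hypothesis is exactly what forces the passage to $A^2$ in general, and the point-spectrum condition is what removes it.
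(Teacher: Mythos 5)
Your proposal is essentially correct, but you should be aware that the paper contains no proof of this statement at all: Theorem \ref{Schmoeger 2003} is quoted from Schmoeger's article purely as a sample of known results that motivate the present work, and the authors' own theorems reaching similar conclusions (Corollaries \ref{e^A=e^B A^2B=BA^2 A normal imaginary spectrum cramped!!!} and \ref{e^A=e^B AB=BA A normal imaginary spectrum cramped!!!}) operate under the different hypothesis $\sigma(\Ima A)\subset(0,\pi)$ and by an entirely different route, namely Theorem \ref{e^AT=Te^A commuting!!!!}, which reduces everything via the Fuglede theorem and the cartesian decomposition to Wermuth's result on commuting self-adjoint exponentials (Theorem \ref{e^Ae^B=e^Be^A iff A=B A,B self-adjoint}); no logarithm or Borel functional calculus ever appears there. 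What you have written is instead a reconstruction of Schmoeger's original strategy: recover $A$ (or $A^2$) as a bounded Borel function of the normal operator $e^A$, place $B$ in the commutant of $\{e^A,(e^A)^*\}$ by Fuglede, and conclude by the double commutant theorem. All the steps check out: the spectral mapping theorem and the principal logarithm handle $(1)$; your fibre analysis correctly isolates $\{i\pi,-i\pi\}$ as the only non-singleton fibre of $\exp$ on $\sigma(A)$, which makes $z\mapsto z^2$ descend; and the identity $A-g(e^A)=-2\pi i\,E(\{-i\pi\})$ together with the equivalence $\lambda\in\sigma_p(A)\Leftrightarrow E(\{\lambda\})\neq 0$ disposes of the last assertion. (The measurability point you flag can even be settled without Lusin--Souslin: writing $\sigma(A)\setminus\{-i\pi\}$ as a countable union of compacta on each of which $\exp$ is a continuous injection, hence a homeomorphism onto its image, already gives a Borel inverse.) The comparison is instructive: your (Schmoeger's) approach genuinely proves cases the paper's machinery cannot touch --- for instance $r(A)<\pi$ only forces $\sigma(\Ima A)\subset(-\pi,\pi)$, an interval of length $2\pi$, so Theorem \ref{e^AT=Te^A commuting!!!!} does not apply --- at the cost of the spectral-measure and measurability technicalities at the boundary $\pm i\pi$; the paper's approach trades generality of the spectral hypothesis for proofs that use nothing beyond Fuglede and one classical theorem on self-adjoint exponentials.
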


Throughout this paper, the reader will see that with simple
hypotheses, we shall get the same conclusions as above.

\section{An Example}
The following example (s) will recalled on in the next section,
mainly as a counterexample.

\begin{exa}\label{Main Example!!!}
Let
\[A=\left(
      \begin{array}{cc}
        0 & \pi \\
        -\pi & 0 \\
      \end{array}
    \right).
\]
Then $A$ is clearly normal. By computing integers powers of $A$ we
may easily check that
\[e^A=\left(
        \begin{array}{cc}
          \cos\pi & \sin\pi \\
          -\sin\pi & \cos\pi \\
        \end{array}
      \right)=
\left(
                                                        \begin{array}{cc}
                                                          -1 & 0 \\
                                                          0 & -1 \\
                                                        \end{array}
                                                      \right)=-I.\]
Next, we have
\[\text{Im}A=\frac{A-A^*}{2i}=\left(
      \begin{array}{cc}
        0 & -i\pi \\
        i\pi & 0 \\
      \end{array}
    \right)\]
    and hence $\sigma(\Ima A)=\{\pi,-\pi\}$. This signifies that
    $\sigma(\Ima A)$ cannot be inside an open interval of length
    equals to $\pi$ (a hypothesis which will play an important role in our proofs).

Now let
\[B=\left(
      \begin{array}{cc}
        \pi & -2\pi \\
        \pi & -\pi \\
      \end{array}
    \right).
\]
We may also show that $e^B=-I$. Finally, it is easily verifiable
that \textit{$A$ and $B$ do not commute}.
\end{exa}

\section{Main Results}

We start by giving a very important result of the paper. We have
\begin{thm}\label{e^AT=Te^A commuting!!!!}
Let $A$ be in $B(H)$. Let $N\in B(H)$ be normal such that
$\sigma(\Ima N)\subset (0,\pi)$. Then
\[Ae^N=e^NA\Longleftrightarrow AN=NA.\]
\end{thm}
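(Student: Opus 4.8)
The plan is to dispose of the easy implication first and then concentrate on the forward direction $Ae^N=e^NA\Longrightarrow AN=NA$. If $AN=NA$, then $A$ commutes with every partial sum of the series defining $e^N$, and hence with $e^N$ itself by passing to the norm limit; so only the converse requires genuine work.

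For the converse I would write the cartesian decomposition $N=C+iD$, where $C$ is the real part and $D=\Ima N$ is the imaginary part. Since $N$ is normal, $CD=DC$, so that $e^N=e^Ce^{iD}$ and, taking adjoints, $e^{N^*}=e^Ce^{-iD}$. The first key observation is that $e^N$ is itself normal, because $N$ and $N^*$ commute, and therefore so do $e^N$ and $(e^N)^*=e^{N^*}$. Hence the Fuglede theorem applies to the hypothesis $Ae^N=e^NA$ and yields $Ae^{N^*}=e^{N^*}A$ as well; thus $A$ commutes with both $e^N$ and $e^{N^*}$.

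Next I would extract the real part. Using $CD=DC$ one computes $e^Ne^{N^*}=e^{2C}$, and since $A$ commutes with each factor it commutes with the product $e^{2C}$. Now $2C$ is self-adjoint, so $e^{2C}$ is positive and invertible with spectrum in $(0,\infty)$; as $\log$ is continuous there, $2C=\log(e^{2C})$ is a continuous function of $e^{2C}$, whence $A$ commutes with $C$, and in particular with $e^C$. Feeding this back into $Ae^Ce^{iD}=e^Ce^{iD}A$ and cancelling the invertible factor $e^C$ gives $Ae^{iD}=e^{iD}A$.

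The final and decisive step is to pass from $e^{iD}$ back to $D$, and this is exactly where the hypothesis $\sigma(D)\subset(0,\pi)$ enters and where I expect the only real obstacle to lie. The operator $e^{iD}$ is unitary with spectrum contained in the open arc $\{e^{i\theta}:0<\theta<\pi\}$; on this arc the argument map $e^{i\theta}\mapsto\theta$ is a well-defined continuous function $g$, precisely because an interval of length $\pi$ forces $\theta\mapsto e^{i\theta}$ to be injective and keeps the arc away from any branch ambiguity. Hence $g(e^{iD})=D$ by the continuous functional calculus for normal operators, and since $A$ commutes with the normal operator $e^{iD}$, and by Fuglede again with its adjoint, $A$ commutes with $g(e^{iD})=D$. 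Combining $AC=CA$ with $AD=DA$ gives $A(C+iD)=(C+iD)A$, that is $AN=NA$. The Example of Section 2, where $\sigma(\Ima A)=\{\pi,-\pi\}$ fails to lie in an interval of length $\pi$, shows that this last step, and hence the theorem, genuinely needs the spectral restriction.
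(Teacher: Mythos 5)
Your proof is correct, but it follows a genuinely different route from the paper's. The paper also begins with Fuglede applied to the normal operator $e^N$, but it then takes adjoints to get $A^*e^N=e^NA^*$ and splits \emph{$A$} into its self-adjoint parts $\mathrm{Re}\,A$ and $\mathrm{Im}\,A$: each of these commutes with $e^N$, hence $e^{\mathrm{Re}\,A}$ and $e^{\mathrm{Im}\,A}$ commute with $e^N$, and Proposition \ref{e^Se^N S self-adjoint, N normal} (the self-adjoint-versus-normal case, which rests on Berberian's similarity result and Theorem \ref{e^Ae^B=e^Be^A iff A=B A,B self-adjoint}) is invoked twice to conclude $(\mathrm{Re}\,A)N=N(\mathrm{Re}\,A)$ and $(\mathrm{Im}\,A)N=N(\mathrm{Im}\,A)$. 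You instead split \emph{$N=C+iD$} and recover $C$ and $D$ directly from $e^N$ by continuous functional calculus: $2C=\log\bigl(e^Ne^{N^*}\bigr)$ as a continuous function of a positive invertible operator, and $D=g\bigl(e^{iD}\bigr)$ via the argument function on the arc, the spectral hypothesis guaranteeing that $\theta\mapsto e^{i\theta}$ is injective on $\sigma(D)$. The paper's argument is shorter because it leans on the previously established Proposition; yours is self-contained, bypasses that Proposition (and with it Berberian's and Wermuth's theorems), and isolates exactly where $\sigma(\mathrm{Im}\,N)\subset(0,\pi)$ is used --- indeed your argument would work for any spectral window on which $\theta\mapsto e^{i\theta}$ is injective, so the length-$\pi$ restriction is not what injectivity itself requires (any interval of length less than $2\pi$ would do for your step), even though it is what the paper's Proposition demands. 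One small presentational point: when you pass from commutation with $e^{iD}$ and $(e^{iD})^*$ to commutation with $g(e^{iD})$, you should say explicitly that polynomials in $z$ and $\bar z$ are dense in $C\bigl(\sigma(e^{iD})\bigr)$ by Stone--Weierstrass; as written this step is asserted rather than argued.
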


\begin{proof}
Of course, we are only concerned with proving the implication
"$\Longrightarrow$". The normality of $N$ implies that of $e^N$, and
so by the Fuglede theorem
\[Ae^N=e^NA\Longleftrightarrow Ae^{N^*}=e^{N^*}A\]
or
\[A^*e^N=e^NA^*.\]
Hence
\[(A+A^*)e^N=e^N(A+A^*) \text{ or (Re}A) e^N=e^N(\text{Re}A)\]
so that
\[e^{\text{Re}A}e^N=e^Ne^{\text{Re}A}.\]
But, \text{Re}$A$ is self-adjoint, so Proposition \ref{e^Se^N S
self-adjoint, N normal} applies and then gives
\[(\text{Re}A)N=N(\text{Re}A).\]
Similarly, we find that
\[\text{(Im}A) e^N=e^N(\text{Im}A)\]
and as $\text{Im}A$ is self-adjoint, similar arguments to those
applied before yield
\[(\text{Im}A)N=N(\text{Im}A).\]
Therefore, $AN=NA$, establishing the result.
\end{proof}

\begin{rema}
The hypothesis $\sigma(\text{Im}N)\subset (0,\pi)$ cannot merely be
dropped. Take $N$ to be the operator $A$ in Example \ref{Main
Example!!!}, and take $A$ to be any operator which does not commute
with $N$. Then
\[Ae^N=e^NA=-A\text{ but } AN\neq NA.\]
\end{rema}

Next we give the first consequence of the previous result. We have

\begin{thm}\label{eA+B A+B normal!!!!!}
Let $A$ and $B$ be  both in $B(H)$. Assume that $A+B$ is normal such
that $\sigma(\text{Im}(A+B))\subset (0,\pi)$. If
\[e^Ae^B=e^Be^A=e^{A+B},\]
then $AB=BA$.
\end{thm}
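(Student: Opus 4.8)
The plan is to take $M:=A+B$ as a pivot and to apply Theorem \ref{e^AT=Te^A commuting!!!!} repeatedly. The point is that $M$ is normal with $\sigma(\Ima M)=\sigma(\Ima(A+B))\subset(0,\pi)$ by hypothesis, so for \emph{every} $X\in B(H)$ the theorem upgrades the commutation $Xe^M=e^MX$ into the commutation $XM=MX$. The hypothesis supplies $e^Ae^B=e^Be^A=e^M$, and the entire argument consists in feeding suitable operators $X$ into this machine.

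First I would check that $e^A$ and $e^B$ each commute with $e^M$. This is a one-line computation from $e^M=e^Ae^B=e^Be^A$: for instance $e^Ae^M=e^Ae^Ae^B$, while $e^Me^A=e^A(e^Be^A)=e^Ae^Ae^B$, and symmetrically for $e^B$. Applying Theorem \ref{e^AT=Te^A commuting!!!!} with $N=M$ then yields $e^AM=Me^A$ and $e^BM=Me^B$.

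Next I would peel off the commutation of the exponentials with the \emph{individual} summands. Since $e^A$ always commutes with $A$, the relation $e^AM=Me^A$ together with $M=A+B$ forces $e^AB=Be^A$; likewise $e^BA=Ae^B$. Armed with $Ae^B=e^BA$, I would show that $A$ itself commutes with $e^M$, via $Ae^M=Ae^Ae^B=e^AAe^B=e^Ae^BA=e^MA$, where the only nontrivial move is $Ae^B=e^BA$. A final application of Theorem \ref{e^AT=Te^A commuting!!!!} gives $AM=MA$, and since $A$ commutes with itself and with $M=A+B$, I conclude $AB=BA$.

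The main subtlety is not any single computation but the \emph{order of bootstrapping}: one cannot pass directly from $e^Ae^B=e^{A+B}$ to $AB=BA$, because neither $A$ nor $B$ is assumed normal, so Theorem \ref{e^AT=Te^A commuting!!!!} cannot be applied to them. The spectral hypothesis lives only on $\Ima(A+B)$, hence every invocation of the theorem must be routed through the normal pivot $M$. The observation that makes this work is the trivial identity that $A$ commutes with $e^A$, which lets one extract $Ae^B=e^BA$ from the purely exponential relations and thereby manufacture the commutation $Ae^M=e^MA$ needed for the last step.
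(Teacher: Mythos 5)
Your proposal is correct and follows essentially the same route as the paper: both arguments apply Theorem \ref{e^AT=Te^A commuting!!!!} twice with the normal pivot $A+B$, first to upgrade $e^Ae^{A+B}=e^{A+B}e^A$ to $(A+B)e^A=e^A(A+B)$ (hence $Be^A=e^AB$, since $A$ commutes with $e^A$), and then to upgrade a commutation of one summand with $e^{A+B}$ into commutation with $A+B$ itself. The only cosmetic difference is that the paper closes by showing $Be^{A+B}=e^{A+B}B$ (multiplying $Be^A=e^AB$ on the right by $e^B$) while you close with $Ae^{A+B}=e^{A+B}A$; the two are symmetric and equally valid.
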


\begin{proof}
We have
\[e^{A+B}e^A=e^Be^Ae^A=e^Ae^Be^A=e^Ae^{A+B}.\]
Since $A+B$ is normal and $\sigma(\text{Im}(A+B))\subset (0,\pi)$,
Theorem \ref{e^AT=Te^A commuting!!!!} gives
\[(A+B)e^A=e^A(A+B) \text{ or just } Be^A=e^AB\]
for $A$ commutes with $e^A$. Now, right multiplying both sides of
the previous equation by $e^B$ leads to
\[Be^Ae^B=e^ABe^B=e^Ae^BB\]
or
\[Be^{A+B}=e^{A+B}B.\]
Applying again Theorem \ref{e^AT=Te^A commuting!!!!}, we see that
\[B(A+B)=(A+B)B \text{ or } AB=BA.\]
The proof is thus complete.
\end{proof}

\begin{cor}
Let $A\in B(H)$. Then
\[e^Ae^{A^*}=e^{A^*}e^A=e^{A+A^*}\Longleftrightarrow A\text{ is normal.}\]
\end{cor}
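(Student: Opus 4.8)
The plan is to read the corollary as the special case $B=A^*$ of Theorem \ref{eA+B A+B normal!!!!!}, supplemented by the standard exponential identity for commuting operators, and to handle the two implications separately.

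For the forward implication, I would set $B=A^*$ and observe that $A+B=A+A^*$ is self-adjoint, hence normal, so the normality hypothesis of Theorem \ref{eA+B A+B normal!!!!!} is satisfied. The assumed chain $e^Ae^{A^*}=e^{A^*}e^A=e^{A+A^*}$ is then exactly the hypothesis $e^Ae^B=e^Be^A=e^{A+B}$ of that theorem. Applying it yields $AB=BA$, that is $AA^*=A^*A$, which is precisely the normality of $A$.

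The one point requiring care, and the main obstacle, is the spectral condition $\sigma(\text{Im}(A+B))\subset(0,\pi)$. Since $A+A^*$ is self-adjoint, its imaginary part vanishes, so $\sigma(\text{Im}(A+A^*))=\{0\}$, which is \emph{not} contained in the open interval $(0,\pi)$. I would resolve this by invoking the Remark following Proposition \ref{e^Se^N S self-adjoint, N normal}, which permits replacing $(0,\pi)$ by the symmetric interval $(-\frac{\pi}{2},\frac{\pi}{2})$ throughout the chain Proposition \ref{e^Se^N S self-adjoint, N normal} $\to$ Theorem \ref{e^AT=Te^A commuting!!!!} $\to$ Theorem \ref{eA+B A+B normal!!!!!}. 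With this interval one has $0\in(-\frac{\pi}{2},\frac{\pi}{2})$, and, as the Remark explicitly notes, every self-adjoint operator (whose imaginary part vanishes) obeys the required spectral condition. Thus the hypothesis is legitimately met by $A+A^*$, and the forward implication goes through.

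For the reverse implication, I would simply use that normality of $A$ means $AA^*=A^*A$. Since $A$ and $A^*$ then commute, the standard fact that $e^Xe^Y=e^Ye^X=e^{X+Y}$ whenever $XY=YX$ gives at once $e^Ae^{A^*}=e^{A^*}e^A=e^{A+A^*}$. This direction is routine and presents no difficulty.
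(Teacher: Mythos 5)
Your proposal is correct and follows essentially the same route as the paper: both apply Theorem \ref{eA+B A+B normal!!!!!} with $B=A^*$ and use the Remark following Proposition \ref{e^Se^N S self-adjoint, N normal} to replace $(0,\pi)$ by $(-\frac{\pi}{2},\frac{\pi}{2})$ so that the self-adjoint operator $A+A^*$ meets the spectral hypothesis. Your explicit attention to why $\sigma(\mathrm{Im}(A+A^*))=\{0\}$ would otherwise violate the stated hypothesis is a point the paper passes over more quickly, but the argument is the same.
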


\begin{proof}
We need only prove the implication "$\Longrightarrow$". It is plain
that $A+A^*$ is self-adjoint. Hence the remark below Proposition
\ref{e^Se^N S self-adjoint, N normal} combined with Theorem
\ref{eA+B A+B normal!!!!!} give us
\[AA^*=A^*A.\]
\end{proof}

We have yet another consequence of Theorem \ref{e^AT=Te^A
commuting!!!!} (cf. Theorem \ref{Schmoeger 2003}).

\begin{cor}\label{e^A=e^B A^2B=BA^2 A normal imaginary spectrum cramped!!!}
Let $A$ be normal such that $\sigma(\text{Im}A)\subset (0,\pi)$. Let
$B\in B(H)$. Then
\[e^A=e^B\Longrightarrow A^2B=BA^2.\]
\end{cor}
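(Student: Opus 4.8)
The plan is to reduce the corollary to Theorem \ref{e^AT=Te^A commuting!!!!} by exploiting the single commutation relation that is always available between an operator and its own exponential. First I would record the elementary fact that every bounded operator commutes with its exponential: since $B$ commutes with each power $B^n$, it commutes with the norm-convergent series $e^B=\sum_{n\geq 0}B^n/n!$, whence $Be^B=e^BB$. The key step is then to transport this onto $e^A$ via the hypothesis $e^A=e^B$: substituting gives at once
\[Be^A=e^AB.\]
This manufactures a commutation of the arbitrary operator $B$ with $e^A$ out of nothing more than the equality of the two exponentials.

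With that in hand, I would invoke Theorem \ref{e^AT=Te^A commuting!!!!} directly. Since $A$ is normal and $\sigma(\text{Im}A)\subset(0,\pi)$, it plays the role of the normal factor in that theorem; taking the arbitrary operator there to be $B$, the relation $Be^A=e^AB$ upgrades to $BA=AB$. From this full commutativity the desired conclusion $A^2B=BA^2$ follows immediately.

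I do not anticipate a genuine obstacle here: the whole argument is the implication $Be^B=e^BB\Rightarrow Be^A=e^AB$ followed by a single application of Theorem \ref{e^AT=Te^A commuting!!!!}. It is worth noting that this reasoning in fact delivers the stronger statement $AB=BA$, so $A^2B=BA^2$ holds a fortiori; the square in the corollary is presumably retained only to match the form of Schmoeger's conclusion in Theorem \ref{Schmoeger 2003}(2), which is here reached under the markedly simpler spectral hypothesis $\sigma(\text{Im}A)\subset(0,\pi)$.
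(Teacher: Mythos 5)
Your argument is correct: $Be^B=e^BB$ always holds, the hypothesis $e^A=e^B$ converts this into $Be^A=e^AB$, and Theorem \ref{e^AT=Te^A commuting!!!!} (with $A$ as the normal operator, legitimately, since $\sigma(\text{Im}A)\subset(0,\pi)$) then yields $AB=BA$, of which $A^2B=BA^2$ is a trivial consequence. However, this is not the route the paper takes for this particular corollary. The paper's own proof starts from the identity $e^B(e^BB)=(Be^B)e^B$, rewrites it as $e^A(e^AB)=(Be^A)e^A$, and applies Theorem \ref{e^AT=Te^A commuting!!!!} twice in succession, first to peel off one exponential and obtain $e^A(AB)=(BA)e^A$, then again to reach $A(AB)=(BA)A$; this double application delivers only $A^2B=BA^2$, mirroring the structure (and the weaker conclusion) of Schmoeger's Theorem \ref{Schmoeger 2003}(2). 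Your one-step argument is instead exactly the paper's proof of the \emph{next} statement, Corollary \ref{e^A=e^B AB=BA A normal imaginary spectrum cramped!!!}, which records the stronger conclusion $AB=BA$ under the same hypotheses. So you have proved something the paper also proves, just under a different label: your approach buys the full commutativity immediately, while the paper's proof of the present corollary is of interest mainly as an illustration of how the iterated use of Theorem \ref{e^AT=Te^A commuting!!!!} reproduces Schmoeger-type conclusions; as a derivation of $A^2B=BA^2$ alone your route is the more economical one.
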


\begin{proof}We obviously have
\[e^B(e^BB)=(Be^B)e^B.\]
So since $e^A=e^B$, we have
\[e^A(e^AB)=(Be^A)e^A.\]
By Theorem \ref{e^AT=Te^A commuting!!!!}, we obtain
\[Ae^AB=Be^AA.\]
Hence
\[e^A(AB)=(BA)e^A.\]
Applying Theorem \ref{e^AT=Te^A commuting!!!!} once more yields
\[A(AB)=(BA)A \text{ or } A^2B=BA^2,\]
completing the proof.
\end{proof}

\begin{cor}\label{e^A=e^B AB=BA A normal imaginary spectrum cramped!!!}
Let $A$ be normal such that $\sigma(\text{Im}A)\subset (0,\pi)$. Let
$B\in B(H)$. Then
\[e^A=e^B\Longrightarrow AB=BA.\]
\end{cor}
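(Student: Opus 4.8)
The plan is to reduce everything to Theorem \ref{e^AT=Te^A commuting!!!!} by means of the single elementary observation that every operator commutes with its own exponential. The hypothesis $e^A=e^B$ will then let me transport this trivial commutation onto $e^A$, after which the spectral condition on $\text{Im}A$ supplies the commutativity with $A$ itself. In particular I expect no genuine analytic difficulty: all of the real work is already packaged inside Theorem \ref{e^AT=Te^A commuting!!!!} (hence, ultimately, inside Fuglede's theorem and Proposition \ref{e^Se^N S self-adjoint, N normal}).

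First I would record the identity $Be^B=e^BB$, valid for every $B\in B(H)$ because $e^B$ is a norm-limit of polynomials in $B$, each of which commutes with $B$. Substituting the hypothesis $e^A=e^B$ into this identity immediately gives $Be^A=e^AB$; that is, $B$ commutes with $e^A$.

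Now the normality of $A$ together with $\sigma(\text{Im}A)\subset(0,\pi)$ places us exactly in the situation of Theorem \ref{e^AT=Te^A commuting!!!!}, applied with the normal operator $N=A$ and the arbitrary operator $B$. The implication ``$\Longrightarrow$'' of that theorem upgrades $Be^A=e^AB$ to $BA=AB$, which is the desired conclusion, and the proof is complete in one stroke.

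The only point worth flagging is a temptation rather than an obstacle: one might try to imitate the proof of Corollary \ref{e^A=e^B A^2B=BA^2 A normal imaginary spectrum cramped!!!} and pass to $e^{2A}$. This is inadvisable here, since the interval $(0,\pi)$ is not preserved under doubling ($\sigma(\text{Im}(2A))\subset(0,2\pi)$ need not lie in $(0,\pi)$), so Theorem \ref{e^AT=Te^A commuting!!!!} cannot be invoked for $2A$, and that route delivers only the weaker $A^2B=BA^2$. Working directly with $e^A$ keeps us inside the admissible interval and yields the full commutativity $AB=BA$ at once.
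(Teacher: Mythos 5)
Your proof is correct and is essentially identical to the paper's: both start from the trivial identity $Be^B=e^BB$, use $e^A=e^B$ to obtain $Be^A=e^AB$, and then invoke Theorem \ref{e^AT=Te^A commuting!!!!} with the normal operator $A$ in the role of $N$ to conclude $AB=BA$.
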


\begin{rema}
By Example \ref{Main Example!!!}, $e^{A}=e^{B}~(=-I)$, but $AB\neq
BA$, showing again the importance of the assumption
$\sigma(\text{Im}A)\subset (0,\pi)$.
\end{rema}

\begin{proof}
We obviously have
\[Be^B=e^BB\]
so that
\[Be^A=e^AB.\]
Theorem \ref{e^AT=Te^A commuting!!!!} does the remaining job, i.e.
it gives the commutativity of $A$ and $B$.
\end{proof}

\begin{cor}\label{e^A=e^B A, B self-adjoint A=B}
Let $A$ and $B$ be two self-adjoint operators. Then
\[e^A=e^B\Longleftrightarrow A=B.\]
\end{cor}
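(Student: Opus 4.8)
The plan is to prove both implications, with the implication ``$\Longleftarrow$'' being immediate, since $A=B$ trivially yields $e^A=e^B$. So I would concentrate on ``$\Longrightarrow$''.

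First I would observe that a self-adjoint operator $A$ is in particular normal with $\text{Im}A=0$, whence $\sigma(\text{Im}A)=\{0\}$. This set does not sit inside the open interval $(0,\pi)$, so Corollary \ref{e^A=e^B AB=BA A normal imaginary spectrum cramped!!!} cannot be applied verbatim. However, by the remark following Proposition \ref{e^Se^N S self-adjoint, N normal}, the interval $(0,\pi)$ may be replaced by $(-\frac{\pi}{2},\frac{\pi}{2})$ throughout, and then every self-adjoint operator satisfies $\sigma(\text{Im}A)=\{0\}\subset(-\frac{\pi}{2},\frac{\pi}{2})$. With this replacement, Corollary \ref{e^A=e^B AB=BA A normal imaginary spectrum cramped!!!} does apply and yields $AB=BA$.

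Next, exploiting the commutativity $AB=BA$, I would combine the two exponentials. Since $A$ commutes with $B$, it commutes with $-B$ as well, so $e^Ae^{-B}=e^{A-B}$. Because $e^A=e^B$ and $e^{-B}=(e^B)^{-1}$, the left-hand side collapses to $e^B(e^B)^{-1}=I$, whence $e^{A-B}=I$.

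Finally, $A-B$ is self-adjoint, being the difference of two self-adjoint operators, so Lemma \ref{e^A=I A s.a. A=0 Lemma} forces $A-B=0$, that is $A=B$. I expect the main obstacle to be precisely the first step: recognizing that the natural hypothesis $\sigma(\text{Im}A)\subset(0,\pi)$ in fact excludes the self-adjoint case, and that the widened interval $(-\frac{\pi}{2},\frac{\pi}{2})$ supplied by the earlier remark is exactly what renders every self-adjoint operator admissible. Once commutativity is in hand, the remaining two steps are routine.
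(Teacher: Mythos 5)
Your proof is correct, and its overall architecture coincides with the paper's: first establish $AB=BA$, then collapse $e^Ae^{-B}$ to $e^{A-B}=I$, and finish with Lemma \ref{e^A=I A s.a. A=0 Lemma} applied to the self-adjoint operator $A-B$. The difference lies in how you obtain the commutativity. You correctly note that $\sigma(\mathrm{Im}\,A)=\{0\}\not\subset(0,\pi)$, so you route the argument through Corollary \ref{e^A=e^B AB=BA A normal imaginary spectrum cramped!!!} after widening the interval to $(-\frac{\pi}{2},\frac{\pi}{2})$ via the remark following Proposition \ref{e^Se^N S self-adjoint, N normal}; this is legitimate, since the interval enters Theorem \ref{e^AT=Te^A commuting!!!!} and its corollaries only through that Proposition, but it implicitly drags in the Fuglede theorem and the whole downstream chain. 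The paper instead observes that $e^A=e^B$ trivially yields $e^Ae^B=e^Be^A$, and then gets $AB=BA$ directly from Theorem \ref{e^Ae^B=e^Be^A iff A=B A,B self-adjoint} (equivalently, the widened Proposition with both operators self-adjoint); the remark the authors append after the corollary makes precisely this point, namely that only Theorem \ref{e^Ae^B=e^Be^A iff A=B A,B self-adjoint} and Lemma \ref{e^A=I A s.a. A=0 Lemma} are needed. So your argument is valid but slightly less economical, while the paper's buys a cleaner dependency on the most elementary ingredients.
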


\begin{proof}
Observe first that we are only concerned with establishing the
implication "$\Longrightarrow$". By the remark below Proposition
\ref{e^Se^N S self-adjoint, N normal}, we may get
\[e^A=e^B\Longrightarrow e^Ae^B=e^Be^A\Longrightarrow AB=BA.\]
Hence
\[I=e^Ae^{-A}=e^{A}e^{-B}=e^{A-B}\]
 since $A$ and $B$ commute. But $A-B$ is obviously
self-adjoint, so Lemma \ref{e^A=I A s.a. A=0 Lemma} gives $A=B$.
\end{proof}

\begin{rema}
The previous corollary is actually a consequence of Theorem
\ref{e^Ae^B=e^Be^A iff A=B A,B self-adjoint}. Authors who did not
argue as we did usually considered it as a consequence of their
results. But, with the proof given here, we clearly see that we only
need Theorem \ref{e^Ae^B=e^Be^A iff A=B A,B self-adjoint} and Lemma
\ref{e^A=I A s.a. A=0 Lemma}.
\end{rema}

\begin{rema}
Of course, the previous corollary also generalizes Lemma \ref{e^A=I
A s.a. A=0 Lemma}.
\end{rema}

\begin{cor}
Let $A$ be normal. Then we have
\[A \text{ is self-adjoint }\Longleftrightarrow e^{iA} \text{ is unitary.}\]
\end{cor}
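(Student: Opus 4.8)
The plan is to prove the two implications separately, the forward one being essentially a formal computation and the reverse one being where the normality hypothesis and Lemma \ref{e^A=I A s.a. A=0 Lemma} do the real work. For the implication ``$\Longrightarrow$'', I would assume $A$ self-adjoint and simply compute $(e^{iA})^*=e^{(iA)^*}=e^{-iA^*}=e^{-iA}$, using $A=A^*$. Since $e^{iA}$ is always invertible with inverse $e^{-iA}$, this identity says that the adjoint of $e^{iA}$ is its inverse, whence $(e^{iA})^*e^{iA}=e^{iA}(e^{iA})^*=I$, i.e. $e^{iA}$ is unitary. No hypothesis beyond self-adjointness is needed here.

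For the implication ``$\Longleftarrow$'', I would assume $A$ is normal and $e^{iA}$ is unitary. Again $e^{iA}$ is invertible with inverse $e^{-iA}$, so unitarity is equivalent to $(e^{iA})^*=e^{-iA}$, that is, to $e^{-iA^*}e^{iA}=I$. The crux of the argument is to collapse the left-hand side into a single exponential. Since $A$ is normal we have $AA^*=A^*A$, so $iA$ and $-iA^*$ commute, and therefore $e^{-iA^*}e^{iA}=e^{i(A-A^*)}$. Recalling that $A-A^*=2i\,\Ima A$, we get $i(A-A^*)=-2\,\Ima A$, so the unitarity condition becomes $e^{-2\Ima A}=I$.

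To finish, I would observe that $\Ima A$ is self-adjoint, hence so is $-2\,\Ima A$, and invoke Lemma \ref{e^A=I A s.a. A=0 Lemma} to conclude $-2\,\Ima A=0$, i.e. $\Ima A=0$, which is exactly $A=A^*$. The step I expect to be the only genuinely load-bearing one is the merging $e^{-iA^*}e^{iA}=e^{i(A-A^*)}$: this is valid precisely because $A$ is normal, and it is where the hypothesis is indispensable, since for non-commuting $A$ and $A^*$ the two exponentials would not combine and the reduction to Lemma \ref{e^A=I A s.a. A=0 Lemma} would fail.
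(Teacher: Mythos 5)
Your proof is correct and follows essentially the same route as the paper: the reverse implication is reduced, via normality, to a single exponential of the self-adjoint operator $iA-iA^*=-2\,\mathrm{Im}\,A$ equalling $I$, and Lemma \ref{e^A=I A s.a. A=0 Lemma} finishes the job. The only difference is that you spell out the forward direction, which the paper dismisses as well known.
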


\begin{proof}
The implication "$\Longrightarrow$" is well-known. Let us prove the
reverse implication. By the normality of $A$, we have
\[e^{iA-iA^*}=e^{iA}e^{-iA^*}=e^{iA}(e^{iA})^*=I.\]
Since $iA-iA^*$ is self-adjoint, Lemma \ref{e^A=I A s.a. A=0 Lemma}
gives $A=A^*$, which completes the proof.
\end{proof}

We now come to a result that appeared in \cite{Fotios-Plaiogannis}
and \cite{schmoeger2001}. It reads: If $A$ is self-adjoint,
$\sigma(A)\subseteq [-\pi,\pi]$ and $e^{iA}=e^B$, then $B^*=-B$ if
$B$ is normal. Here is an improvement of this result.

\begin{pro}
If $A$ is self-adjoint and $e^{iA}=e^B$, then $B^*=-B$ whenever $B$
is normal.
\end{pro}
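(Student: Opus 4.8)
The plan is to exploit that $A$ being self-adjoint forces $e^{iA}$ to be unitary, and then to extract the conclusion from the normality of $B$ by isolating its real part. First I would observe that since $A$ is self-adjoint, $iA$ is skew-adjoint, whence
\[
(e^{iA})^*e^{iA}=e^{(iA)^*}e^{iA}=e^{-iA}e^{iA}=I;
\]
that is, $e^{iA}$ is unitary. As $e^B=e^{iA}$, the operator $e^B$ is unitary too, so $(e^B)^*e^B=I$.

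Next I would bring in the normality of $B$. Since $(e^B)^*=e^{B^*}$ and $B$ commutes with $B^*$, the standard fact that commuting bounded operators satisfy $e^Xe^Y=e^{X+Y}$ applies and gives
\[
I=(e^B)^*e^B=e^{B^*}e^B=e^{B^*+B}=e^{2\text{Re}B}.
\]
Now $2\text{Re}B=B+B^*$ is self-adjoint, so Lemma \ref{e^A=I A s.a. A=0 Lemma} yields $\text{Re}B=0$, i.e. $B+B^*=0$, which is exactly $B^*=-B$, as desired.

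The only place where normality is genuinely used is the step $e^{B^*}e^B=e^{B^*+B}$, which requires $B$ and $B^*$ to commute; this is the main (and essentially only) obstacle, and it is settled immediately by the defining property of a normal operator. Everything else follows at once from the unitarity of $e^{iA}$ and Lemma \ref{e^A=I A s.a. A=0 Lemma}. Note that the hypothesis is used in full strength: the conclusion is symmetric in the sense that it only pins down $\text{Re}B$, which is precisely what the self-adjointness of $A$ (hence unitarity of $e^{iA}$) controls.

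Alternatively, one may bypass the explicit computation entirely: applying the preceding corollary (characterizing self-adjointness of a normal operator $M$ through unitarity of $e^{iM}$) to the normal operator $M:=-iB$, for which $e^{iM}=e^B$ is unitary, shows that $-iB$ is self-adjoint, and $(-iB)^*=-iB$ is again nothing but $B^*=-B$.
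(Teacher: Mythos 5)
Your proof is correct and follows essentially the same route as the paper's: both establish that $e^{B^*}e^B=I$ (the paper phrases this as $e^{-B}=e^{B^*}$), invoke the normality of $B$ to write this as $e^{B+B^*}=I$, and conclude via Lemma \ref{e^A=I A s.a. A=0 Lemma} applied to the self-adjoint operator $B+B^*$. Your alternative remark---applying the preceding corollary to the normal operator $-iB$---is also valid and is a nice observation, though it is logically the same computation repackaged.
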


\begin{proof}
It is clear that $e^{iA}$ is unitary. We also have
\[e^{B^{*}}=e^{-iA} \text{ and } e^{-B}=e^{-iA}.\]
Thus
\[e^{-B}=e^{B^*} \text{ so that } e^{B+B^*}=I\]
because $B$ is normal. However, $B+B^*$ is always self-adjoint,
whence $B^*=-B$ by Lemma \ref{e^A=I A s.a. A=0 Lemma}.
\end{proof}

\section{Conclusion}

The results of this paper as well as those of the paper
\cite{Mortad-Exp-normal-Colloquium} should be easily generalized to
unital Banach algebras.

Theorem \ref{e^AT=Te^A commuting!!!!} is very important here. The
very simple and interesting proof of Corollary \ref{e^A=e^B
A^2B=BA^2 A normal imaginary spectrum cramped!!!} or Corollary
\ref{e^A=e^B AB=BA A normal imaginary spectrum cramped!!!} could not
have been achieved if we had not Theorem \ref{e^AT=Te^A
commuting!!!!} in hand. Also, as mentioned in the introduction, most
of the proofs, for instance that of Corollary \ref{e^A=e^B A, B
self-adjoint A=B}, may be adopted to prove the results that use the
$2\pi i$-congruence-free hypothesis and similar hypotheses.

\bibliographystyle{amsplain}

\end{document}